\newtheorem{theorem}[equation]{Theorem}
\newtheorem{lemma}[equation]{Lemma}
\newtheorem{proposition}[equation]{Proposition}
\newtheorem{definition}[equation]{Definition}
\newtheorem{remark}[equation]{Remark}
\title{On Neck Singularities for 2-Convex Mean Curvature Flow}
\begin{document}
\author{Alexander Majchrowski} 
\date{}
\maketitle
\numberwithin{equation}{section}
\newcommand{\map}[2]{\,{:}\,#1\!\longrightarrow\!#2}
\newcommand{\defn}[1]{\textbf{#1}}
\newcommand{\newln}{\\&\quad\quad{}}
\renewcommand{\thefootnote}{\fnsymbol{footnote}}

	\section{Introduction}
    Let $F_0:\mathcal{M}\to\mathbb{R}^{n+1}$ be a smooth immersion of an oriented $n$-dimesnional hypersurface in Euclidean space with $n\geq 3$. The evolution of $\mathcal{M}_0=F_0(\mathcal{M})$ by mean curvature flow is the one-parameter family of smooth immersions $F:\mathcal{M}\times[0,T)$, $T<\infty$ satisfying
    \begin{align*}
    \frac{\partial F}{\partial t}(p,t)&=-H(p,t)\nu(p,t),\quad p\in\mathcal{M},t\geq0\\
    &F(\cdot,0)=F_0,
    \end{align*} 
    where $H(p,t)$ and $\nu(p,t)$ are the mean curvature and the outer normal respectively at the point $F(p,t)$ of the surface $\mathcal{M}_t=F(\cdot,t)(\mathcal{M})$. The signs are chosen such that $-H\nu=\vec{H}$ is the mean curvature vector and the mean curvature of a convex surface is positive. 
    We define a surface to be two-convex if the sum of the two smallest eigenvalues is always positive, i.e. $\lambda_1+\lambda_2\geq 0$ everywhere on $\mathcal{M}_0$. For more details regarding Mean curvature flow for convex and 2-convex hypersurfaces please refer to \cite{huisken1984flow}, \cite{huisken1999convexity} and \cite{huisken2009mean}.
    
    In this paper we are dealing with mean curvature flow with surgeries of two-convex hypersurfaces \cite{huisken2009mean}. The main focus is to expand on the discussion in Section $3$ of \cite{huisken2009mean}. Firstly we wish to establish how the neck detection lemma allows us to detect necks where the cross sections will be diffeomorphic to $S^{n-1}$. We then want to see how we are able to glue these cross sections together with full control on their parametrisation - for this we will show we can use a harmonic spherical parametrisation \cite{hamilton1997four}. We then introduce the notion of a normal and maximal necks, this allows us to obtain uniqueness, existence and overlapping properties for normal parametrisations on $(\epsilon,k)$-cylindrical hypersurface necks. Lastly given a neck $N:S^{n-1}\times[a,b]\to\mathcal{M}$ we want to see that in the case that either $a=\infty$ or $b=\infty$ that this forces them to both to be $\infty$ and that we are left with a solid tube $S^{n-1}\times S^1$.

	Acknowledgements:
	I would like to thank Gerhard Huisken for his invaluable help in providing me with the key details for this argument.
	
	I would also like to thank Stephan Tillman for help regarding the topological arguments presented here.   
	
	Last but not least I would like to thank my supervisor Zhou Zhang and the School of Mathematics at The University of Sydney for their help and support.

	\section{Properties of Necks}
	
		We want to begin by showing that an immersed, compact, 2-convex hypersurface undergoing mean curvature flow will develop necks in the regions with large curvature as the singular time is approached. But first we must provide the definition of a curvature and a geometric neck. It is easier to detect curvature necks, using estimates of quantities satisfied by the solutions of the flow. However surgery is only possible on regions diffeomorphic to a cylinder, a geometric neck. Hamilton showed that these two are basically equivalent, \cite{hamilton1997four}.
	
	\begin{definition}[Extrinsic curvature necks]
		Let $\mathcal{M}^n\to\mathbb{R}^{n+1}$ be a smooth hypersurface and $p\in\mathcal{M}^n$.
		\begin{enumerate}[(i)]
			\item We say the extrinsic curvature is $\epsilon$-cylindrical at $p$ is there exists an orthonormal frame at $p$ such that
			\begin{align}
			|W(p)-\bar{W}(p)|\leq\epsilon \label{epneck}
			\end{align}
			where $\bar{W}(p)$ is the Weingarten map on the tangent space to $\mathbb{S}^{n-1}\times\mathbb{R}\to\mathbb{R}^{n+1}$ in a standard frame.
			\item We say the extrinsic curvature is $(\epsilon,k)$-parallel at $p$ if
			\begin{align}
			|\nabla^lW(p)|\leq \epsilon\quad\text{for}\;1\leq l\leq k.\label{epkpar}
			\end{align}
			
			\item We say that $p$ lies at the centre of an $(\epsilon,k,L)$ extrinsic curvature neck if it is $(\epsilon,k)$-parallel $\in B_L(p)$ and the extrinsic curvature is $(\epsilon,k,L)$-hypothetically cylindrical around $p$.
		\end{enumerate}
	\end{definition}
	
	\begin{definition}[Geometric Neck]
		The local diffeomorphism $N:\mathbb{S}^{n-1}\times[a,b]\to(\mathcal{M},g)$ is called an (intrinsic) $(\epsilon,k)$-cylindrical geometric neck if it satisfies the following conditions:
		\begin{enumerate}[(i)]
			\item The conformal metric $\hat{g}=r^{-2}(z)g$ satisfies the estimates
			\begin{align}
			|\hat{g}-\bar{g}|_{\bar{g}}\leq \epsilon,\quad |\bar{D}^j\hat{g}|_{\bar{g}}\leq\epsilon\quad\text{for}\;1\leq j\leq k \label{cylinintrin1}
			\end{align}
			uniformly on $\mathbb{S}^{n-1}\times[a,b]$.
			\item The mean radius function $r:[a,b]\to\mathbb{R}$ satisfies the estimate
			\begin{align}
			|(\frac{d}{dz})^j\log r(z)|\leq\epsilon \label{clinintrin2}
			\end{align}
			for all $1\leq j\leq k$ everywhere on $[a,b]$.
		\end{enumerate}
		Moreover we can say that $N$ is an $(\epsilon,k)$-cylindrical hypersurface neck if in addition to the above assumptions we also have:
		\begin{align}
		|W(q)-r(z)^{-1}\bar{W}|&\leq\epsilon r(z)^{-1}\quad\text{and}\\
		|\nabla^lW(q)|&\leq\epsilon r(z)^{-l-1},\quad 1\leq l\leq k,\label{cylinintrin2}
		\end{align}
		for all $q\in\mathbb{S}^{n-1}\times{z}$ and all $z\in[a,b]$.
	\end{definition}
	
	\begin{definition}
		Given $t,\theta$ such that $0\leq t-\theta<t\leq T_0$, we define the backward parabolic neighbourhood of $(p,t)$ by,
		\begin{align}
		\mathcal{P}(p,t,r,\theta)=\{(q,s)|q\in\mathcal{B}_{g(t)}(p,r),s\in[t-\theta,t]\}. \label{backwardpara}
		\end{align}
		where $\mathcal{B}_{g(t)}(p,r)\subset\mathcal{M}$ is the closed ball of radius $r$ w.r.t. the metric $g(t)$.
	\end{definition}
	
	We define the following to simplify the analysis of necks. 
	\begin{align}
	\hat{r}(p,t):=\frac{n-1}{H(p,t)},\;\hat{\mathcal{P}}:=(p,t,l,\theta):=\mathcal{P}(p,t,\hat{r}(p,t),\hat{r}(p,t)^2\theta). \label{paraneck}
	\end{align}

	The following lemma provides the first step in detecting necks when the curvature is large enough as a singular time is approached.
	
	\begin{lemma}
		Let $\mathcal{M}_t$, $t\in[0,T)$ be a mean curvature flow with surgeries as defined in \cite{huisken2009mean}. Starting from an initial manifold $\mathcal{M}_t\in C(R,\alpha)$ for some $R,\alpha$. Let $\epsilon,
		\theta,L>0$ and $k\geq k_0\geq2$ be given. Then we can find $\eta_0, H_0$ with the following property. Suppose that $p_0\in\mathcal{M}$ and $t\in[0,T)$ are such that
		\begin{enumerate}[(ND1)]
			\item $H(p_0,t_0)\geq H_0$, $\frac{\lambda_1(p_0,t_0)}{H(p_0,t_0)}\leq\eta_0$
			\item The neighbourhood $\hat{\mathcal{P}}(p_0,t_0,L,\theta)$ does not contain surgeries.
		\end{enumerate}
		Then
		\begin{enumerate}[(i)]
			\item The neighbourhood $\hat{\mathcal{P}}(p_0,t_0,L\theta)$ is an $(\epsilon,k_0-1,L,\theta)$-shrinking curvature neck;
			\item The neighbourhood $\hat{\mathcal{P}}(p_0,t_0,L-1,\theta/2)$ us ab $(\epsilon,k,L-1,\theta/2)$ shrinking curvature neck.
		\end{enumerate}
		The constant $\eta_0(\alpha,\epsilon,k,L,\theta)$, whilst $H_0=h_0R^{-1}$ , where $h_0(\alpha,\epsilon,k,L,\theta)$.
	\end{lemma}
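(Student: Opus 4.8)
The plan is to prove this neck detection lemma by a contradiction/compactness argument, following the strategy pioneered by Huisken--Sinestrari. First I would assume the statement fails: fix $\epsilon,\theta,L$ and $k\ge k_0\ge 2$, and suppose that for every choice of constants there is a counterexample. This produces a sequence of mean curvature flows with surgeries $\mathcal{M}^{(j)}_t$ starting in $C(R,\alpha)$, together with points $(p_j,t_j)$ satisfying (ND1) with $H(p_j,t_j)\to\infty$ and $\lambda_1(p_j,t_j)/H(p_j,t_j)\to 0$, and (ND2) with the parabolic neighbourhood $\hat{\mathcal{P}}(p_j,t_j,L,\theta)$ surgery-free, but such that the conclusion (i) or (ii) fails for each $j$. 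The key normalisation is to rescale each flow parabolically by the factor $H(p_j,t_j)/(n-1) = \hat r(p_j,t_j)^{-1}$ and translate so that $p_j$ sits at the origin at time $0$; after rescaling the mean curvature at the marked point is $n-1$.

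The second step is to extract a smooth limit flow. This rests on three inputs that I would invoke: (a) the a priori curvature estimates for 2-convex mean curvature flow with surgery from \cite{huisken2009mean} (in particular gradient estimates of Ecker--Huisken type and the bounds on $|\nabla^l A|$ in terms of $H$), which give uniform $C^\infty$ bounds on the rescaled flows on the surgery-free parabolic neighbourhoods; (b) the convexity estimate, which forces any smooth blow-up limit of a 2-convex flow to be weakly convex; and (c) the pinching/cylindrical estimate together with the fact that $\lambda_1/H\to 0$ at the marked point, which forces the limit to have $\lambda_1=0$ somewhere. By the Hamilton--Ivey--type splitting theorem / strong maximum principle argument (the limit is weakly convex, smooth, and has a zero eigenvalue of the second fundamental form), the limiting flow must split off a line, and combined with the fact that it is an ancient, mean-convex, self-consistent limit, it is precisely the shrinking round cylinder $S^{n-1}\times\mathbb{R}$ with the standard mean curvature normalisation. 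This step — establishing the compactness and identifying the limit as exactly the cylinder — is the main obstacle, since it requires carefully accounting for the surgery regions (ensuring they escape to spatial infinity in the rescaled picture thanks to (ND2) and the choice of $L$, $\theta$), and requires the full strength of the structural estimates for 2-convex flows.

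The third step is to convert the smooth convergence to the cylinder into the quantitative conclusion. Since the rescaled flows converge in $C^\infty$ on $\hat{\mathcal{P}}(0,0,L,\theta)$ to the standard cylinder, for $j$ large the Weingarten map and its covariant derivatives satisfy $|W - \bar W|\le\epsilon$ and $|\nabla^l W|\le\epsilon$ for $1\le l\le k_0-1$ on the corresponding neighbourhood of radius $L$; this is exactly the statement that $\hat{\mathcal{P}}(p_j,t_j,L,\theta)$ is an $(\epsilon,k_0-1,L,\theta)$-shrinking curvature neck, contradicting the failure of (i). For part (ii) I would use the same convergence but now exploit that after one unit of rescaled time the interior regularity improves: on the slightly smaller neighbourhood $\hat{\mathcal{P}}(p_j,t_j,L-1,\theta/2)$ the higher derivative bounds $|\nabla^l W|\le\epsilon r^{-l-1}$ hold for all $1\le l\le k$, because parabolic interior estimates upgrade the derivative control once we are bounded away from the parabolic boundary — this is why the statement trades a smaller region and time interval for control on more derivatives.

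Finally I would unwind the rescaling to record the explicit dependence of the constants. The threshold $\eta_0$ depends only on $\alpha,\epsilon,k,L,\theta$ because the compactness argument above only used these; the scaling-invariance of mean curvature flow forces $H_0$ to scale like $R^{-1}$, so $H_0 = h_0 R^{-1}$ with $h_0 = h_0(\alpha,\epsilon,k,L,\theta)$, completing the proof. I would remark that the hypothesis $n\ge 3$ enters precisely in guaranteeing that the cross-section $S^{n-1}$ of the limit cylinder is connected and simply connected, which is what will later allow the harmonic spherical parametrisation to be set up.
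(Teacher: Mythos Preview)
The paper does not actually prove this lemma: it is stated as a quotation of the neck detection lemma from \cite{huisken2009mean} (their Lemma~7.4), with no proof given here. So there is nothing in the present paper to compare your argument against; your sketch is essentially the standard Huisken--Sinestrari contradiction/compactness proof, and in that sense it is the right approach.

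Two small points of precision are worth flagging. First, the blow-up limit you extract is not ancient: the surgery-free hypothesis (ND2) only gives you a backward time interval of rescaled length $\theta$, so the limit flow lives on $S^{n-1}\times\mathbb{R}\times[-\theta,0]$, not on $(-\infty,0]$. The identification of the limit therefore does not go through a classification of ancient solutions; rather one argues that at the final rescaled time the limit hypersurface is weakly convex with $\lambda_1=0$ at the basepoint, then invokes the cylindrical estimate and the strong maximum principle (splitting) to see it is exactly a round cylinder at that time, and finally uses uniqueness for mean curvature flow to identify the flow on the whole parabolic neighbourhood with the shrinking cylinder. Second, your closing remark about $n\ge 3$ is not part of the proof of this lemma and slightly conflates two issues: the simple connectivity of the cylinder (used later in the overlap theorem) and the rigidity of harmonic spherical parametrisations (the delicate case is the cross-section $S^2$, i.e.\ $n=3$, because of the large conformal group). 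Neither enters the compactness argument here.
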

	
	We can combine the above lemma with the following proposition found in \cite{hamilton1997four} C3.2, to find that there is a closed cross section with tightly pinched Riemannian curvature. This tells you that there is some diffeomorphism of this cross section to that of a standard sphere $S^{n-1}$, \cite{huisken1985ricci}.
	
	\begin{proposition}\label{graph}
		Let $k\geq 1$. For all $L\geq 10$ there exists $\epsilon(n,L)>0$ and $c(n,L)$ such that at any point $p\in\mathcal{M}$ which lies at the centre of an $(\epsilon,k,L)$ extrinsic curvature neck with $0<\epsilon\leq \epsilon(n,L)$ has a neighbourhood which after appropriate rescaling can be written as a cylindrical f a function $u:S^{n-1}\times[-(L-1),(L-1)]\to\mathbb{R}$ over some standard cylinder in $\mathbb{R}^{n+1}$, satisfying
		\begin{align*}
		||u||_{C^{k+2}}\leq c(n,L)\epsilon
		\end{align*}
	\end{proposition}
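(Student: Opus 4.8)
The proposition is essentially a quantitative version of the classical fact that a hypersurface whose Weingarten map is $C^k$-close to that of a round cylinder is, after rescaling, a small graph over a standard cylinder. The statement is attributed to Hamilton C3.2, so the job here is to recall the argument rather than to invent it. Let me lay out how I would organize it.

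First I would normalize. By the hypothesis that $p$ lies at the centre of an $(\epsilon,k,L)$ extrinsic curvature neck, we may rescale the ambient $\mathbb{R}^{n+1}$ so that $H(p)=n-1$, i.e. the "mean radius" at $p$ equals $1$; this is the "appropriate rescaling" in the statement. After this normalization, the $(\epsilon,k)$-parallel condition $|\nabla^l W|\le\epsilon$ for $1\le l\le k$ together with $|W(p)-\bar W(p)|\le\epsilon$ gives, by integrating the bound on $\nabla W$ along geodesics, that $|W(q)-\bar W|\le C(n,L)\epsilon$ throughout $B_L(p)$ — the Weingarten map is uniformly close to the cylinder model $\bar W=\mathrm{diag}(1,\dots,1,0)$ on the whole neighbourhood, not just at the centre.

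Second, I would pass from the second fundamental form to the immersion itself. Fix an adapted frame at $p$ so that the degenerate direction of $\bar W$ picks out an "axis" direction $e_{n+1-\text{axis}}$; set up coordinates $(\omega,z)\in S^{n-1}\times[-(L-1),L-1]$ adapted to the model cylinder. Writing the hypersurface locally as a graph of a function $u$ over this model cylinder, one computes that the second fundamental form of the graph is $\bar W + (\text{first and second derivatives of }u) + (\text{lower order})$. Since the left side is within $C(n,L)\epsilon$ of $\bar W$ in $C^{k}$ (bounds on $\nabla^l W$ up to $l=k$ translate, after accounting for Christoffel symbols which are themselves controlled, into $C^k$ bounds on $W-\bar W$), this is a second-order elliptic system for $u$ with right-hand side of size $\epsilon$ and with $u$ known to be $O(\epsilon)$ in $C^0$ near $p$ (the graph passes through $p$ on the model). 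Elliptic/Schauder estimates — or just repeated integration of the ODE-like structure along the cylinder, exploiting that the linearization of the mean-curvature-type operator around the cylinder is invertible on the relevant function spaces after fixing the center-of-mass and rotational normalization — then bootstrap this to $\|u\|_{C^{k+2}}\le c(n,L)\epsilon$, the extra two derivatives coming from elliptic regularity applied to the second-order system. I would invoke here the standard machinery (as in Huisken–Sinestrari or Hamilton) rather than rederiving it.

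The main obstacle, and the place where one must be careful, is the passage from "pointwise closeness of $W$ and its covariant derivatives" to "the surface is a graph of a small function over a \emph{fixed} standard cylinder." This requires (a) choosing the axis of the model cylinder correctly — it should be, roughly, the direction that is perpendicular to the kernel of $\bar W$ transported across $B_L(p)$, and one must check this direction does not wander by more than $O(\epsilon)$ over the neck, which again follows from the $\nabla W$ bound; (b) checking that the graph representation does not degenerate, i.e. that the hypersurface stays in a cone around the model so that the projection to the cylinder is a diffeomorphism onto $S^{n-1}\times[-(L-1),L-1]$ — this is where the constant $\epsilon(n,L)$ must be taken small depending on $L$, since the "drift" accumulates over a length-$2(L-1)$ piece of axis; and (c) fixing the residual gauge freedom (translations along the axis, rotations) so that $u$ is uniquely determined and the estimate closes. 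Once the graphical gauge is pinned down, the $C^{k+2}$ estimate is a routine elliptic bootstrap, so I would spend the bulk of the write-up on the geometric setup in (a)–(c) and cite the literature for the analytic estimate.
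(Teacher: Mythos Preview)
The paper does not actually prove this proposition; immediately after the statement it says ``The proof of the above can be found in \cite{huisken2009mean} Proposition 3.5.'' So there is nothing to compare against in the paper itself, only the cited reference.

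Your sketch does capture the skeleton of the Huisken--Sinestrari argument: normalize so that the mean radius at $p$ is $1$, propagate the pointwise closeness $|W-\bar W|\le\epsilon$ across $B_L(p)$ by integrating the $|\nabla^l W|\le\epsilon$ bounds, choose the axis as (approximately) the null direction of $\bar W$, and then express the surface as a graph over the model cylinder. Your discussion of the obstacles (axis drift, non-degeneracy of the projection, need for $\epsilon$ small in $L$) is on point.

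One correction of emphasis: the mechanism that produces the $C^{k+2}$ bound in the reference is not elliptic regularity but the Gauss--Weingarten equations. The immersion $F$ together with a moving frame satisfies a first-order ODE system along the surface whose coefficients are the entries of $W$; the standard cylinder immersion satisfies the same system with $W$ replaced by $\bar W$. Subtracting and integrating twice converts $C^k$ control on $W-\bar W$ directly into $C^{k+2}$ control on $F-\bar F$, i.e.\ on $u$. Your phrase ``repeated integration of the ODE-like structure along the cylinder'' is the correct description; the talk of ``second-order elliptic system,'' ``Schauder estimates,'' and ``invertibility of the linearization'' is misplaced here, since the full second fundamental form gives an overdetermined (not elliptic) system for the scalar $u$, and no PDE is being solved --- one is simply reconstructing the immersion from its second fundamental form via the fundamental theorem of hypersurfaces, with quantitative dependence on the data.
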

	
	The proof of the above can be found in \cite{huisken2009mean} Proposition 3.5.

	Once we know these cross sections  are $(\epsilon,k)$ spherical by Proposition \ref{graph}, we can obtain a harmonic spherical parametrisation, Theorem C1.1 in \cite{hamilton1997four}.
	
	\begin{definition}
		A harmonic spherical parametrisation is of the form $P^*=PF$ where we want
		\begin{align*}
		F(S^n,\bar{g})\to(S^n,g)
		\end{align*}
		to be harmonic from the standard metric $\bar{g}$ to the pull-back metric $g$.
	\end{definition}
	
	\begin{theorem}
		If there exists a geometrically $(\epsilon,k)$ spherical parametrization of $\mathcal{M}$, then there also exists a harmonic spherical parametrization. If $n\geq 3$ it is unique up to rotation.
	\end{theorem}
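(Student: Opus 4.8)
The plan is to produce the harmonic parametrization by perturbing the identity map of the round sphere via an implicit function theorem. By hypothesis there is a geometrically $(\epsilon,k)$-spherical parametrization $P:(S^n,\bar g)\to\mathcal{M}$, and pulling back the induced metric gives a metric $g:=P^{*}g_{\mathcal{M}}$ on $S^n$ with $\|g-\bar g\|_{C^{k}}\le C\epsilon$ after the normalization built into the notion of an $(\epsilon,k)$-spherical parametrization. One then looks for $F:(S^n,\bar g)\to(S^n,g)$ close to the identity with vanishing tension field, $\tau_{g}(F)=\operatorname{tr}_{\bar g}\nabla\,dF=0$; setting $P^{*}:=P\circ F$ then gives the harmonic spherical parametrization, and since $F$ is $C^{1}$-close to $\mathrm{id}$ it is automatically a diffeomorphism. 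When $g=\bar g$ the identity solves the equation (it is totally geodesic), so everything reduces to a perturbation argument around $(\mathrm{id},\bar g)$, the only subtlety being a nontrivial kernel of the linearization.

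The linearization of $F\mapsto\tau_{g}(F)$ at $F=\mathrm{id}$, $g=\bar g$, acting on vector fields $V$ along $\mathrm{id}$, is the Jacobi operator $JV=\nabla^{*}\nabla V-(n-1)V$ (rough Laplacian minus the Ricci curvature of the round metric). A Weitzenb\"ock computation shows $\ker J$ is exactly the space of Killing fields $\mathfrak{k}\cong\mathfrak{so}(n+1)$: a Killing field satisfies $\nabla^{*}\nabla V=\operatorname{Ric}(V)=(n-1)V$, whereas the gradient of a first eigenfunction $f$ (the conformal direction, $\mu_{1}=n$) satisfies $\nabla^{*}\nabla(\nabla f)=(\mu_{1}-(n-1))\nabla f=\nabla f$, which equals $(n-1)\nabla f$ only when $n=2$. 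Hence for $n\ge 3$ the kernel is $\mathfrak{k}$, and since $J$ is elliptic and formally self-adjoint the cokernel is $\mathfrak{k}$ as well; note that $\mathfrak{k}$ is the tangent space at $\mathrm{id}$ to the orbit $\{\mathrm{id}\circ\rho:\rho\in SO(n+1)\}$ of genuine harmonic maps (precomposition by a $\bar g$-isometry preserves harmonicity), so the kernel is integrable and the unperturbed problem is Morse--Bott along that orbit.

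I would then fix the gauge by working on the slice $\mathcal{S}=\{\exp_{\mathrm{id}}(V):V\perp_{L^{2}(\bar g)}\mathfrak{k}\}$, on which $J$ restricts to an isomorphism onto $\mathfrak{k}^{\perp}$, and run a Lyapunov--Schmidt reduction in H\"older spaces: the $\mathfrak{k}^{\perp}$-component of $\tau_{g}(F)=0$ is solved by the implicit function theorem (with elliptic bootstrapping) once $\epsilon$ is small, yielding $F_{g}\in\mathcal{S}$ with $\tau_{g}(F_{g})\in\mathfrak{k}$ and $\|F_{g}-\mathrm{id}\|_{C^{k+1}}\le C\epsilon$. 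The step I expect to be the main obstacle is showing that the remaining finite-dimensional obstruction, the $\mathfrak{k}$-component of $\tau_{g}(F_{g})$, vanishes; this should follow from equivariance, since the Dirichlet energy $E_{g}$ is invariant under precomposition by isometries of $\bar g$ (a $\bar g$-isometry preserves both $dV_{\bar g}$ and $|dF|^{2}$ pointwise), so its negative gradient $\tau_{g}$ is, at every $F$ near $\mathrm{id}$, $L^{2}$-orthogonal to the orbit directions (a small perturbation of $\mathfrak{k}$), which together with the transversality already used forces the $\mathfrak{k}$-component to vanish.

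For the uniqueness statement when $n\ge 3$: any competing harmonic spherical parametrization $P\circ\tilde F$ with $\tilde F$ near $\mathrm{id}$ can be brought into the slice $\mathcal{S}$ by precomposing with a unique rotation $\rho$ close to the identity (solve $\Pi_{\mathfrak{k}}\exp_{\mathrm{id}}^{-1}(\tilde F\circ\rho)=0$ for $\rho\in SO(n+1)$ by the implicit function theorem, using that $\mathfrak{k}$ is the tangent space to the $SO(n+1)$-orbit), and then $\tilde F\circ\rho=F_{g}$ by the uniqueness on the slice; hence the harmonic spherical parametrization is unique up to precomposition with a rotation of $S^n$. One last remark: the harmonic map heat flow is not a convenient route here, since the target $(S^n,g)$ has positive curvature and, indeed, by the sign of the Jacobi operator in the conformal directions, $\mathrm{id}$ is for $n\ge 3$ an unstable critical point of $E_{\bar g}$.
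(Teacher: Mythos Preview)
The paper does not give its own proof of this theorem; it simply states the result and attributes it to Hamilton's four-manifolds paper (Theorem~C1.1 there), so there is nothing in the present paper to compare against directly. Your argument---pull back to a metric $g$ on $S^n$ with $\|g-\bar g\|_{C^k}\le C\epsilon$, linearize the tension field at the identity to obtain the Jacobi operator $JV=\nabla^*\nabla V-(n-1)V$ with kernel exactly the Killing fields for $n\ge 3$, slice transversally to $\mathfrak{so}(n+1)$ and solve by Lyapunov--Schmidt, and then kill the residual $\mathfrak{k}$-component by the $SO(n{+}1)$-equivariance of the Dirichlet energy under precomposition---is correct and is essentially Hamilton's original implicit-function-theorem proof. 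Your identification of the $n=2$ anomaly via the conformal (gradient-of-first-eigenfunction) directions entering $\ker J$ is also the right explanation for the remark that follows the theorem. In short: no gap, and your route coincides with the one the paper is citing.
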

	
	\begin{remark}
		For $n=2$ it is unique up to a conformal transformation, and hence unique up to a rotation if we also require that the centre of mass of the pull-back metric $g$ on $S^n\subset \mathbb{R}^{n+1}$ lies at the origin $0$. This makes the $n=2$ case more complicated to deal with.
	\end{remark}
	
	This theorem improves on our parametrisation by giving us a harmonic one. This makes the parametrisation rigid and close to the standard parametrisation of the sphere in angular directions, the only freedom left now is the rigid rotation of the standard $S^{n-1}$ in each cross section of the neck. That is, the $z$ coordinate does not matter, we will have the same rotation.
	
	To obtain a unique $z$-coordinate along the neck, we can use the implicit function theorem to make the cross sections of constant mean curvature and then label them by the volume between them, this is shown in the proof of the next Lemma. Since this is an elliptic equation we can get our cross sections even closer to the standard round sphere in higher norms than the first cross sections we found at the beginning. To do so we first need to define a normal neck.

	\begin{definition}
		A topological neck $N$ in a manfiold $\mathcal{M}$ is a local diffeomorphism of a cylinder into $\mathcal{M}$
		\begin{align*}
		N: S^{n-1}\times[a,b]\to(\mathcal{M},g)
		\end{align*}
		The neck is called normal if it satisfies the following conditions:
		\begin{enumerate}[(i)]
			\item Each cross section $\Sigma_z=N(S^{n-1}\times\{z\})\subset(\mathcal{M},g)$ has constant mean curvature.
			\item The restriction of $N$ to each $S^{n-1}\times\{z\}$ equipped with the standard metric is a harmonic map to $\Sigma_z$ equipped with the metric induced by $g$, and
			\item in case $n=3$ only, the centre of mass of the pull-back of $g$ on $S^2\times\{z\}$ considered as a subset of $\mathbb{R}^3\times\{z\}$ lies at the origin ${0}\times\{z\}$.
			\item The volume of any subcylinder with respect to the pullback of g is given by
			\begin{align*}
			Vol(S^{n-1}\times[v,w],g)=\sigma_{n-1}\int_{v}^{w}r(z)^ndz.
			\end{align*}
			\item For any Killing vector field $\bar{V}$ on $S^{n-1}\times\{z\}$ we have that
			\begin{align*}
			\int_{S^{n-1\times\{z\}}} \bar{g}(\bar{V},U)d\mu=0
			\end{align*}
			where $U$ is the unit normal vector field to $\Sigma_z$ in $(\mathcal{M},g)$ and $d\mu$ is the measure of the metric $\bar{g}$ on the standard cylinder.
		\end{enumerate}
	\end{definition}

	The following lemma and proof from \cite{hamilton1997four} C2.1 tells us how to fit all the cross sections together with complete control on their parametrisation.
	
	\begin{lemma}\label{lemmaisom}
		There exists $(\epsilon,k)$ so that if $N_1$ and $N_2$ are necks in the same manifold $\mathcal{M}$ and both are normal and geometrically $(\epsilon,k)$ cylindrical, and if there exists a diffeomorphism $F$ of the cylinders such that $N_2=FN_1$, then $F$ is an isometry in the standard metrics on the cylinders.
	\end{lemma}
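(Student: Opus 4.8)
The plan is to show that $F$ is forced to respect the product structure of the cylinders, $F(\omega,z)=(\rho(z)\omega,\phi(z))$, and then that the five normalising conditions in the definition of a normal neck successively remove every remaining degree of freedom, leaving only $\rho\in O(n)$ constant and $\phi(z)=\pm z+c$ --- which is exactly the isometry group of the standard product metric $\bar g$. The first step is to show that $F$ maps cross sections to cross sections. Each $\Sigma_z^{(1)}:=N_1(\mathbb S^{n-1}\times\{z\})$ has constant mean curvature by condition (i) for $N_1$, and by $(\epsilon,k)$-cylindricity together with Proposition~\ref{graph} it is, after rescaling, $C^{k+2}$-close to a round sphere; viewed through the chart $N_2$ it is therefore $C^{2}$-close to some cross section $\mathbb S^{n-1}\times\{z'\}$ of $N_2$, which is itself CMC by condition (i) for $N_2$. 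The implicit function theorem, applied as in \cite{hamilton1997four} and \cite{huisken2009mean}, shows that for $\epsilon$ small a closed CMC hypersurface this close to such a cross section must coincide with it, so $\Sigma_z^{(1)}=\Sigma_{z'}^{(2)}$ and hence $F(\mathbb S^{n-1}\times\{z\})=\mathbb S^{n-1}\times\{z'\}$. Setting $z'=\phi(z)$, this writes $F(\omega,z)=(f(\omega,z),\phi(z))$ with $\phi$ a diffeomorphism of the base intervals and each $f(\cdot,z):\mathbb S^{n-1}\to\mathbb S^{n-1}$ a diffeomorphism.

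Next I would determine $f(\cdot,z)$ and $\phi$. Restricting the identity $N_1=N_2\circ F$ to $\mathbb S^{n-1}\times\{z\}$ shows that $f(\cdot,z)$ intertwines the harmonic parametrisation $N_1|_{\Sigma_z^{(1)}}$ with the harmonic parametrisation $N_2|_{\Sigma_{\phi(z)}^{(2)}}$ of the \emph{same} cross section equipped with its induced metric (condition (ii) for both necks), and both are $C^{k}$-close to the standard parametrisation by $(\epsilon,k)$-cylindricity. By the uniqueness of harmonic spherical parametrisations stated above --- invoking condition (iii), the centre-of-mass normalisation, to cover the borderline case $n=3$ where the cross sections are two-spheres --- the two parametrisations differ by a rotation, so $f(\cdot,z)=\rho(z)\in O(n)$ and $F(\omega,z)=(\rho(z)\omega,\phi(z))$. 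To see that $\phi$ is affine, note that the mean radius $r(z)$ depends only on the cross section $\Sigma_z$ as a subset of $\mathcal M$, so $\Sigma_z^{(1)}=\Sigma_{\phi(z)}^{(2)}$ gives $r_1(z)=r_2(\phi(z))$; since $F$ is an isometry from $(\mathbb S^{n-1}\times[v,w],N_1^*g)$ onto $(\mathbb S^{n-1}\times[\phi(v),\phi(w)],N_2^*g)$, equating the two volumes given by condition (iv), changing variables by $\phi$, and differentiating in $w$ yields $|\phi'|\equiv 1$, hence $\phi(z)=\pm z+c$.

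It remains to show that $\rho(z)$ is constant, which is where condition (v) enters. From $F(\omega,z)=(\rho(z)\omega,\pm z+c)$ one computes that the component of $dF(\partial_z)$ tangent to the cross section is the Killing field on the round $\mathbb S^{n-1}$ generated by $A(z):=\dot\rho(z)\rho(z)^{-1}\in\mathfrak{so}(n)$, precomposed with $\rho(z)$. Decomposing $(N_1)_*\partial_z$ and $(N_2)_*\partial_z$ into their parts normal and tangent to the common cross section $\Sigma_z^{(1)}=\Sigma_{\phi(z)}^{(2)}$, condition (v) for $N_1$ and for $N_2$ states that the respective tangential parts, pulled back to the standard cylinder, are $L^2$-orthogonal to every Killing field of $\mathbb S^{n-1}$; subtracting the two identities, the difference of the tangential parts equals, up to an $O(\epsilon)$ error arising from the discrepancy between the induced and standard metrics, precisely the Killing field generated by $A(z)$. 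Since a Killing field that is $L^2$-orthogonal to all Killing fields must vanish, this forces $A(z)\equiv 0$ once $\epsilon$ is small, i.e. $\rho$ is constant. Then $F(\omega,z)=(\rho\omega,\pm z+c)$ with $\rho\in O(n)$ fixed, and $F^*\bar g=\bar g$, completing the argument.

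I expect the main obstacle to be the first step: the uniqueness of CMC hypersurfaces $C^2$-close to a round cross section, equivalently the statement that $N_1$ and $N_2$ induce the same CMC foliation leaf by leaf, which requires the implicit function theorem with uniform constants over the whole neck --- and on a slightly shrunk domain near the ends, where a leaf of one foliation may exit the image of the other. A secondary technical point is to make the orthogonality computation in the last step quantitative enough to conclude $\dot\rho\equiv 0$ exactly, rather than merely that $|\dot\rho|$ is small, by tracking how the $(\epsilon,k)$-cylindrical estimates propagate into condition (v).
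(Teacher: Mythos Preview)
Your proposal is correct and follows essentially the same route as the paper: both use condition (i) together with the implicit function theorem to show that $F$ preserves the CMC foliation, conditions (ii)--(iii) and the uniqueness of harmonic spherical parametrisations to reduce the slice maps to rotations $\rho(z)\in O(n)$, condition (iv) to force $\phi(z)=\pm z+c$, and condition (v) to make $\rho$ constant. Your write-up is considerably more explicit than the paper's terse sketch, and the two technical concerns you flag at the end---uniform implicit-function-theorem constants along the neck and making the final orthogonality step exact---are points the paper's proof does not spell out either.
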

	
	\begin{proof}
		For any smooth constant mean curvature hypersurface, there exists a unique one-parameter family of nearby constant mean curvature hypersurfaces by the implicit function theorem. The map F takes an end of one cylinder to an end of the other. Since these constant mean curvature hypersurfaces agree under $F$, so do all the nearly ones; and we can pursue this all the way from one end to the other. Referring to the definition above condition (i) guarantees that $F$ preserves the foliation by horizontal spheres. Given the foliation, condition (ii) together with the geometric closeness to the standard metric makes $F$ act by isometry on each horizontal sphere $S^{n-1}\times\{z\}$. Condition (iv) forces the vertical height functions $z$ to differ by an isometry of $\mathbb{R}$. Lastly condition (v) ensures that the possible rotations in the harmonic spherical parametrisation of each individual cross section are glued together in such a way that there is only one rotation of the standard $S^{n-1}$ left to choose for the whole neck; because by parts (i),(ii),(iv) we are dealing with a map of the cylinder to itself which preserves the height and acts on each horizontal sphere by rotation, and if it is perpendicular to the rotations it must be constant.
	\end{proof}

    It is this rigidity of the parametrisation along the neck that ensures that you are not just somehow diffeomorphic to $S^{n-1}\times[a,b]$ in the neck, but also extremely close (up to rescaling) to the standard metric and parametrisation of the cylinder. In particular this ensures that there is a diffeomorphism unique up to a rotation and close to an isometry between the two cross sections at the ends of a neck.
    
    We now have uniqueness. For existence of normal necks refer to Theorem C2.2 in \cite{hamilton1997four}.
    
    We wish to combine normal necks which are cylindrical enough and overlap more than a little bit near the ends into a single neck.
    Unfortunately Lemma \ref{lemmaisom} is not enough. It tells us that if a diffeomorphism exists then we have isometry, but it does not guarantee the existence of this diffeomorphism $F$. The next theorem and proof from \cite{hamilton1997four} C2.4 will guarantee the existence of such a diffeomorphism and give us the overlapping properties we require.
    
    \begin{theorem}
    	For and $\delta>0$ we can choose $\epsilon>0$ and $k$ with the following property. If $N_1,N_2$ are two normal necks in the same manifold $M$ which are both geometrically $(\epsilon,k)$ cylindrical, and if there is any point $P_1$ in the domain cylinder of $N_1$ at standard distance at least $\delta$ from the ends whose imagine in $M$ is also in the image of $N_2$, then there exists a normal neck $N$ which is also geometrically $(\epsilon,k)$ cylindrical, and there exist diffeomorphisms $F_1$ and $F_2$ such that $N_1=NF_1$ and $N_2=NF_2$, provided $n\geq 3$
    \end{theorem}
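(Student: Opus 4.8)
The plan is to show that $N_1$ and $N_2$ agree, up to an isometry of standard cylinders, on the region where their images in $M$ overlap, and then to glue the two domain cylinders along this overlap to produce $N$, taking $F_1,F_2$ to be the inclusion maps. The hypothesis that $P_1$ sits at standard distance at least $\delta$ from the ends of $N_1$ supplies the buffer needed to carry the matching argument across, and it is at this stage that the constants $\epsilon=\epsilon(\delta)$ and $k=k(\delta)$ get fixed.

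First I would locate the common point: write $P_1=N_1(q_1,z_1)$ with $\operatorname{dist}(z_1,\partial[a_1,b_1])\geq\delta$, and pick $(q_2,z_2)$ with $N_2(q_2,z_2)=P_1$. Near $P_1$ both $N_1$ and $N_2$ present a foliation by constant mean curvature cross sections which, after rescaling by the mean radius, is $C^{k}$-close to the standard foliation of a round cylinder by spheres; since $\epsilon$ is small these two foliations are close to the \emph{same} standard one. By the implicit-function-theorem uniqueness of constant mean curvature hypersurfaces near a given one --- the fact already used in the proof of Lemma~\ref{lemmaisom} --- the leaf $\Sigma^1_{z_1}$ of $N_1$ coincides, as a subset of $M$, with a leaf $\Sigma^2_{z'}$ of $N_2$ on a neighbourhood of $P_1$; since each is a connected closed hypersurface and the coincidence set is open and closed, the leaves coincide entirely (one uses here that $\Sigma^1_{z_1}$, being $\epsilon$-close to a round sphere lying in the image of $N_2$, does not run off that image). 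I would then propagate the coincidence in the $z$-direction: the set of $z$ for which $\Sigma^1_z$ coincides with some cross section of $N_2$ is open, by the implicit function theorem along the smooth families of leaves, and closed, since a limit of leaves of the constant mean curvature foliation of $N_2$ is again such a leaf, so the component through $z_1$ is a maximal subinterval $[c,d]$; along it we obtain a correspondence $z\mapsto z'(z)$ and a diffeomorphism $F$ of the subcylinder $S^{n-1}\times[c,d]$ of the domain of $N_1$ onto a subcylinder of the domain of $N_2$ with $N_2\circ F=N_1$ there, the distance-$\delta$ hypothesis guaranteeing that $[c,d]$ is long enough to be a genuine subneck. Since $F$ intertwines two normal, geometrically $(\epsilon,k)$-cylindrical necks, Lemma~\ref{lemmaisom} applies to this overlap and shows $F$ is an isometry of the standard cylinders: a rotation $A\in O(n)$ on the $S^{n-1}$ factor composed with $z\mapsto\pm z+\text{const}$ on the interval. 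It is precisely here that $n\geq3$ is used, through the uniqueness up to rotation of the harmonic spherical parametrisation; for $n=2$ the residual conformal freedom would obstruct this step.

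Finally I would glue. Identifying the domain of $N_1$ with a subcylinder of the domain of $N_2$ by the standard isometry $F$ produces a manifold $C=(S^{n-1}\times[a_1,b_1])\cup_F(S^{n-1}\times[a_2,b_2])$; by maximality of the overlap each end of the glued subcylinder is a boundary component of one of the two factors, so no branching occurs and $C$ is again isometric to a standard cylinder $S^{n-1}\times[a,b]$. Define $N\colon C\to M$ to equal $N_1$ on the first piece and $N_2$ on the second; this is well defined by the matching of foliations, is a local diffeomorphism, and is geometrically $(\epsilon,k)$-cylindrical because on each piece it inherits the estimates of $N_1$ or $N_2$. It is also normal: condition~(i) holds since every cross section of $N$ is a cross section of $N_1$ or of $N_2$; conditions~(ii), (v), and (for $n=3$) (iii) are conditions on individual cross sections and so are inherited; and the volume normalisation~(iv) holds on each piece and hence, by additivity of volume along the cylinder (choosing the $z$-coordinate on $C$ to agree with those of $N_1$ and $N_2$ up to the affine shift recorded by $F$, so that the mean radius function $r(z)$ is globally well defined), on all of $C$. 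Taking $F_1,F_2$ to be the inclusions of the domain cylinders of $N_1,N_2$ into $C$ then gives $NF_1=N_1$ and $NF_2=N_2$, as required.

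The step I expect to be the main obstacle is the propagation argument of the second paragraph: one must rule out the two constant mean curvature foliations drifting apart along the overlap, and must check that the overlap of the two images really is a subcylinder of each neck rather than a more complicated or disconnected subset --- something that could occur a priori, since $N_1$ and $N_2$ are only local diffeomorphisms. This is where the quantitative closeness ($\epsilon$ small and $k$ large, depending on $\delta$) and the distance-$\delta$ hypothesis are genuinely used; the remaining steps amount to bookkeeping against the definition of a normal neck.
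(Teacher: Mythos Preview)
Your outline is broadly correct and follows the same architecture as the paper: match the two constant-mean-curvature foliations on the overlap, invoke Lemma~\ref{lemmaisom} to upgrade the matching to a standard isometry, then glue. The paper, however, executes the matching step differently and more concretely, and the difference is exactly where you flag the ``main obstacle''.

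Rather than arguing directly that the leaf $\Sigma^1_{z_1}$ of $N_1$ stays inside the image of $N_2$, the paper works the other way round. It takes the cross-sectional sphere $S^{n-1}\times\{z_2\}$ in the domain of $N_2$, projects paths in it to $M$ via $N_2$, and \emph{lifts} them through the local diffeomorphism $N_1$ into the domain cylinder of $N_1$. The $\delta$-hypothesis is on $N_1$, not $N_2$, so this is the direction in which one has room; your parenthetical ``$\Sigma^1_{z_1}$ \dots does not run off the image of $N_2$'' is not obviously available, since nothing prevents $P_2$ from sitting at the very boundary of $N_2$'s cylinder. The paper's mechanism for showing the lifted paths stay inside $N_1$'s cylinder is a Ricci-curvature argument: on a near-standard cylinder the Ricci curvature is close to $n-1$ in horizontal (spherical) directions and close to $0$ in the vertical direction, and since Ricci is intrinsic it transfers under the local isometry between the two pullback metrics; hence a horizontal path in $N_2$'s cylinder lifts to a nearly horizontal path in $N_1$'s cylinder and cannot reach the ends. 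This is the genuine content you gesture at but do not supply.

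Two further small points. First, the paper invokes $n\ge 3$ for the simple connectedness of $S^{n-1}\times[a,b]$, which is what makes the path-lifting construction of $G$ single-valued; you instead attribute it to the uniqueness of the harmonic spherical parametrisation, which enters only later through Lemma~\ref{lemmaisom}. Second, because $N_1,N_2$ are only local diffeomorphisms, the phrase ``coincide as subsets of $M$'' is delicate; the path-lifting formulation sidesteps this by working entirely in the domain cylinder of $N_1$ and producing a map $G$ between domain spheres with $N_1G=N_2$, after which the image of $G$ is identified with $S^{n-1}\times\{z_1\}$ by the uniqueness of nearby CMC spheres. Your gluing paragraph and verification of normality for $N$ are fine.
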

	
	\begin{proof}
		If $n\geq 3$ then the cylinder $S^{n-1}\times[a,b]$ is simply connected. Let $P_2\in S^{n-1}\times\{z_2\}$ be a point in the cylinder $N_2$ whose image $P=N_2P_2$ in $\mathcal{M}$ is the same as the image $P=N_1P_1$ of the given $P_1\in S	^{n-1}\times\{z_1\}$. We claim that we can find a map 
		\begin{align*}
		G:S^{n-1}\times\{z_2\}\to S^{n-1}\times\{z_1\}
		\end{align*}
		such that $N_1G=N_2$ and $GP_2=P_1$. To see this we take any path $\gamma_2$ from $P_2$ to any point $Q_2\in S^{n-1}\times\{z_2\}$. Let $\gamma=N_2\gamma_2$ be its projection in $\mathcal{M}$, we then lift $\gamma$ to a path $\gamma_1$ in the first cylinder with $\gamma=N_1\gamma_1$. The point $P_1$ is well in the interior, so we can lift this path until we reach a point $Q_1$ with $N_1Q_1=Q=N_2Q_2$.
		
		\begin{center}
	
		\begin{tikzpicture}[scale=0.7]

		\filldraw[fill=green!20!white](7.5,0)circle(1.8cm);
		\draw(7.5,2.5)node{$\mathcal{M}$};
		\filldraw(0.3,0.62)circle(1pt);
		\filldraw[fill=red!20!white](0.5,0)circle(1.8cm);
		\filldraw[fill=red!20!white](4.5,-5)circle(1.8cm);
		\draw(0.5,0)node{$P_1$};
		\draw(0.9,1)node{$Q_1$};
		\filldraw(0,0)circle(1pt);
		\filldraw(0.5,1)circle(1pt);
		\draw(6.5,-1)node{$P$};
		\filldraw(6.8,-1)circle(1pt);
		\draw(7.6,1)node{$Q$};
		\filldraw(7.9,1)circle(1pt);
		\draw[->](3,0)to[out=45,in=135]node[above]{$\gamma=N_1\gamma_1$}(5.5,0);
		\draw(0.5,2.5)node{$S^{n-1}\times\{z_1\}$};
		\draw[thick,color=blue](0,0) to[out=90,in=-90] (0.5,1);
		\draw[thick,color=blue](6.8,-1) to[out=90,in=-90] (7.9,1);
		\draw(7,0)node{$\gamma$};
		\draw(-.5,0.5)node{$\gamma_1$};
		\draw(4,-6)node{$P_2$};
		\draw(5.5,-4.8)node{$Q_2$};
		\filldraw(4.5,-6)circle(1pt);
		\filldraw(5,-4.8)circle(1pt);
		\draw[thick,color=blue](4.5,-6) to[out=90,in=-90] (5,-4.8);
		\filldraw(4.5,-5)node{$\gamma_2$};
		\draw[->](6.3,-4)to[out=45,in=-90]node[right]{$\gamma=N_2\gamma_2$}(7,-2);
		\draw(4.5,-8)node{$S^{n-1}\times\{z_2\}$};
		\draw[->](2.8,-4)to[out=160,in=-70]node[left]{$G$}(1.5,-2);
		
		\end{tikzpicture}
	\end{center}
		
		The only case where this would fail would be if $\gamma_1$ ran into the boundary of the first cylinder. But we claim this won't happen as $\gamma_1$ is nearly horizontal. The metric $(\mathcal{M},g)$ will pull back onto metrics $(N_1,g_1)$ and $(N_2,g_2)$, both of these are close to the standard metrics $\bar{g}_1$ and $\bar{g}_2$ on the two cylinders. The horizontal spheres on the standard cylinders are where the Ricci curvatures of the product metric are all $n-1$, while in the vertical direction they are $0$. For $k\geq 0$ the curvatures of $g_1$ are close to $\bar{g}_1$ and $g_2$ are close to those of $\bar{g}_2$. The Ricci curvature in the direction of $\gamma_2$ is close to $n-1$ since it is in $S^{n-1}$, and the Ricci curvature of $g_1$ in the direction of $\gamma_1$ is equal to that of $g_2$ in $\gamma_2$. Therefore $\gamma_1$ is close to horizontal. As long as the path $\gamma_2$ is not too long and $(\epsilon,k)$ are chosen well enough, the path $\gamma_1$ cannot exit the cylinder since its length is about the same. Since $S^{n-1}$ is simply connected the map $G$ taking $Q_2$ to $Q_1$ is uniquely defined by this process and the choice of $P_1$ and $P_2$.
		The image of $S^{n-1}\times\{z_2\}$ under the map $G$ will be another constant mean curvature sphere as locally $G$ extends to an isometry from $g_2$ to $g_1$, this new constant mean curvature sphere will be nearly horizontal and pass through $P_1$, applying the inverse function theorem we know that such spheres are unique. This tells us that the imahge of $S^{n-1}\times\{z_2\}$ under $G$ is exactly the sphere $S^{n-1}\times\{z_1\}$, so that $\gamma_1$ stayed exactly horizontal.
		 It remains to check whether the orientations of the normal bundles in the cylinders to the two spheres agree in their images in $\mathcal{M}$. If they don't we can flip one of the cylinders and continue the argument. Then the spheres $S^{n-1}\times\{z_2+\mu\}$ will map to the spheres $S^{n-1}\times\{z_1+\mu\}$ under the obvious extension of $G$ using similar lifts, for $\mu$ near $0$ and hence for $\mu$ in some interval. This process lets us patch our cylinders together using $G$, which must be an isometry from $\bar{g_2}$ to $\bar{g_1}$ using the previous Lemma.
	\end{proof}

	\section{Maximal Normal Necks}	
	
	Lastly we will define a maximal neck and show that all our $(\epsilon,k)$-cylindrical geometric necks can be classified as either a maximal normal neck of finite length or that our manifold $\mathcal{M}$ is diffeomorphic to a quotient of $S^{n-1}\times\mathbb{R}$.

	\begin{definition}
		An $(\epsilon,k)$-cylindrical hypersurface neck $N$ is a maximal normal $(\epsilon,k)$-cylindrical hypersurface neck if $N$ is normal and if whenever $N^*$ is another such normal neck with $N=N^*F$ for some diffeomorphism $F$ then the map $F$ is onto.
	\end{definition}
	
We finish by showing a result from \cite{hamilton1997four} C2.5. We will show that we can classify our necks as finite maximal normal necks or $S^{n-1}\times S^1$.
	
	\begin{theorem}
		For any $\delta>0$ we can choose $\epsilon>0$ and $k$ so that any normal neck defined on a cylinder of length at least $3\delta$ which is geometrically $(\epsilon,k)$ cylindrical is contained in a maximal normal $(\epsilon,k)$ neck; or else the target manifold $M$ is diffeomorphic to a quotient of $S^{n-1}\times \mathbb{R}$ by a group of isometries in the standard metric.
	\end{theorem}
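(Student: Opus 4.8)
The plan is to enlarge the given neck as far as possible by a Zorn's lemma argument on the poset of normal $(\epsilon,k)$-cylindrical hypersurface necks that contain it, and to use the preceding overlapping theorem to dispose of the single exceptional case in which this enlargement never terminates.

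First I would fix $\delta>0$ and choose $\epsilon>0$, $k$ small/large enough that both Lemma~\ref{lemmaisom} and the preceding overlapping theorem hold with parameter $\delta$. Given a normal $(\epsilon,k)$-cylindrical neck $N\colon S^{n-1}\times[a_0,b_0]\to M$ with $b_0-a_0\ge 3\delta$, let $\mathcal F$ be the set of all normal $(\epsilon,k)$-cylindrical hypersurface necks $N'$ in $M$ admitting a diffeomorphism $F$ with $N=N'F$, ordered by $N'\preceq N''$ whenever $N'=N''G$ for some diffeomorphism $G$. By Lemma~\ref{lemmaisom} every such $F$ and $G$ is an isometry of the standard cylinders; after normalising the height coordinate by the volume identity in part (iv) of the definition of a normal neck and fixing a point together with an orthonormal frame in one cross-section (legitimate by parts (ii) and (v)), each $N'\in\mathcal F$ becomes a genuine extension of $N$ to a cylinder $S^{n-1}\times[a',b']\supseteq S^{n-1}\times[a_0,b_0]$, and $\preceq$ is just inclusion of domains; in particular a chain in $\mathcal F$ is a nested family of maps.

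Next I would produce upper bounds for chains of bounded length. If $\mathcal C\subset\mathcal F$ is a chain with $\sup_{N'\in\mathcal C}(b'-a')<\infty$, the union of the domains is a bounded interval $(A,B)$ on which the members of $\mathcal C$ glue to a single local diffeomorphism $N_{\mathcal C}\colon S^{n-1}\times(A,B)\to M$. Since the normality conditions (i)--(v) and the $(\epsilon,k)$-cylindricality estimates are all expressed by equalities of pointwise or integral quantities and by non-strict inequalities, they are stable under $C^k$-limits; combining this with compactness of $M$ and the uniform curvature bounds along the neck, the cross-sections at $z\to A^+$ and $z\to B^-$ converge to limit cross-sections carrying all the required structure, so $N_{\mathcal C}$ extends to $S^{n-1}\times[A,B]$ and lies in $\mathcal F$. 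Thus every bounded-length chain has an upper bound. If \emph{every} chain in $\mathcal F$ has bounded length, Zorn's lemma yields a maximal $N^*\in\mathcal F$; any normal $(\epsilon,k)$-cylindrical neck $\tilde N$ with $N^*=\tilde N F$ and $F$ not onto would strictly dominate $N^*$ and still contain $N$, hence lie in $\mathcal F$, contradicting maximality --- so $N^*$ is a maximal normal $(\epsilon,k)$-cylindrical neck containing $N$, which is the first alternative.

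It remains to treat the case where some chain $\mathcal C\subset\mathcal F$ has unbounded length; its union is then an immersed normal $(\epsilon,k)$-cylindrical neck $N_\infty\colon S^{n-1}\times I\to M$ with $I$ unbounded, say $I\supseteq[a_0,\infty)$. Here I would argue that, because $M$ is compact while the cross-sections $\Sigma_z$ have uniformly controlled intrinsic and extrinsic geometry, the images $N_\infty(\Sigma_z)$ must recur as $z\to\infty$: one finds $z_1<z_2$ with $z_2-z_1$ as large as desired such that a point of the sub-neck of $N_\infty$ around $z_1$, lying at standard distance at least $\delta$ from its ends, is carried into the image of the sub-neck around $z_2$. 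The preceding overlapping theorem then forces both sub-necks to factor through a common normal $(\epsilon,k)$-cylindrical neck $N'$; matching the volume-normalised height parametrisations, this identification is a translation $z\mapsto z+(z_2-z_1)$ composed with an isometry $A\in O(n)$ of $S^{n-1}$, i.e.\ a nontrivial deck transformation $\phi$ of $S^{n-1}\times\mathbb R$ with $N_\infty\circ\phi=N_\infty$ on the overlap, an identity which then propagates over all of $S^{n-1}\times\mathbb R$ by the rigidity of normal necks. Consequently $N_\infty$ descends to an immersion of the compact mapping torus $(S^{n-1}\times\mathbb R)/\langle\phi\rangle$ into $M$; being a proper local diffeomorphism of a closed manifold, it is a covering onto the connected component of $M$ meeting the neck, so that component (equivalently all of $M$, when $M$ is connected) is diffeomorphic to a quotient of $S^{n-1}\times\mathbb R$ by a group of isometries of the standard metric, which is the second alternative. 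The main obstacle is precisely this last step: establishing rigorously that the cross-sections genuinely recur in $M$ (so that, for instance, the mean radius cannot simply degenerate), checking that the map supplied by the overlapping theorem really is the expected isometric deck transformation rather than merely an isometric embedding onto a sub-cylinder, and confirming that the induced map of the mapping torus is a covering; the earlier steps are essentially formal once Lemma~\ref{lemmaisom} and the overlapping theorem are available.
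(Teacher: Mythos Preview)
Your proposal is correct and follows essentially the same strategy as the paper: extend the given neck as far as possible (you via Zorn's lemma on chains of extensions, the paper by directly taking the union of all extensions to obtain a neck on $S^{n-1}\times B^1$ for some interval $B^1\subset\mathbb{R}$), close up any finite open-ended extension using the uniform $(\epsilon,k)$ bounds, and in the unbounded case use compactness/finite volume of $M$ to locate a self-overlap and invoke the preceding overlapping theorem to force periodicity and the quotient structure. Your treatment of the final deck-transformation and covering step is in fact more explicit than the paper's, which simply asserts that once two interior points at different heights have the same image ``the neck $N$ must repeat itself, so both $a=\infty$ and $b=\infty$.''
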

	
	\begin{proof}
		Since the neck $N$ has a domain cylinder of standard length at least $3\delta$, a point $P$ in the middle has standard distance at least $\delta$ from either end. If there is any other normal neck $N^*$ which is geometrically $(\epsilon,k)$ cylindrical with $N=N^*F$ for some $F$, then the previous theorem allows us to extend the definition of $N$ to a longer cylinder, and this extension $\bar{N}$ is unique, and now $N^*=\bar{N}\bar{F}$ for a map $\bar{F}$. Take the largest extension $\tilde{N}$ if $N$. It will be defined on $S^{n-1}\times B^1$ for some interval $B^1\subset \mathbb{R}$. If $B^1$ is of the form $[a,b]$ with $-\infty<a<b<\infty$ we have a maximal $(\epsilon,k)$ neck. If we have an interval $(a,b],(a,b]$ or $(a,b)$ with $-\infty<a<b<\infty$, we have enough bounds to extend the neck to the endpoints, so the original was not the largest. If $a=\infty$ but $b<\infty$ or vice-versa, then there must be two points $P_1$ and $P_2$ in the domain cylinder at different heights $z_1$ and $z_2$ with the same image in $\mathcal{M}$, because $\mathcal{M}$ has a finite volume and $N$ is clearly a local isometry so there must be considerable overlap. In fact we can make $P_1$ and $P_2$ at least $\delta$ from the finite end. Then the previous theorem shows that the neck $N$ must repeat itself, so both $a=\infty$ and $b=\infty$.
	\end{proof}
	
	\begin{remark}
		When we detect $S^{n-1}\times S^1$ we haven't glued together the cross sections $S^{n-1}\times\{a\}$ and $S^{n-1}\times\{b\}$, this is a more complicated case. What has happened is we have detected a return to the same cross section in $\mathcal{M}$, and due to uniqueness of these cross sections Lemma \ref{lemmaisom} no twisting/rotation can occur and we return with the same orientation. 
	\end{remark}
	
	\begin{remark}
		
			Given a cylinder $S^{n-1}\times[a,b]$ it is possible to glue the ends together $S^{n-1}\times[a,b]\large{/}\varphi$ where $\varphi$ is an orientation reversing homeomorphism $\varphi:S^{n-1}\times\{a\}\to S^{n-1}\times\{b\}$ such that this structure is topologically equivalent to $S^{n-1}\times S^1$. Regardless of the rotation of the cross sections at the ends $S^{n-1}\times\{a\}$ and $S^{n-1}\times\{b\}$.

			We can verify this as follows. We can think of this as a two-step process. We want choose an orientation of $S^{n-1}\times[a,b]$ such that we have an orientable manifold in the end. Suppose you want to glue $\{p\}\times\{a\}$ to $\{q\}\times\{b\}$. Then a small neighbourhood of $\{p\}\times\{a\}$ in $S^n\times\{a\}$ should be identified with a small neighbourhood of $\{q\}\times\{b\}$ in $S^{n-1}\times\{b\}$. These are two oriented discs, and you identify them by any orientation reversing homeomorphism. Then the resulting identification space is an oriented manifold with boundary. The boundary is a $(n-1)$-sphere. Hence glue to this an oriented ball, again identifying the boundary spheres by any orientation reversing homeomorphism. The result is homeomorphic with $S^{n-1}\times S^1$.
			
			What we have done is attached a $n$-dimensional $1$-handle to $S^{n-1}\times[a,b]$ with attaching region in different components of the boundary, and then attached a $n$-dimensional $n$-handle. We need only to make sure we attach with the right orientations. From this, you can define a homeomorphism with the standard $S^{n-1}\times S^1$.  The manifold will have a natural smooth structure at all points except at corner points, the union of which coincides with the boundary of the handle's base. This structure can be uniquely extended to a smooth structure on the entire manifold. Such extension is called smoothing of corners, refer to \cite{scorpan2005wild}.

			This can go wrong if we fail to choose the right orientation when attaching the $1$-handle. For example in dimension $3$ when we choose an orientation preserving homeomorphism, then a loop running along the $1$-handle and then connecting $\{p\}\times\{a\}$ and $\{q\}\times\{b\}$ in $S^2\times[a,b]$ would have the neighbourhood of a solid Klein bottle, not a torus.
		\end{remark}
	
	Now since the $\epsilon$ closeness is true even on the space-time region of the neck we are able to control the diffeomorphism type of the neck in a backward parabolic neighbourhood.  Moreover we can also control it in cases where surgery has occurred at an earlier time on a region adjacent to the neck. This is needed in the proof of Lemma 7.12 in \cite{huisken2009mean}, required to prove the neck continuation theorem, Theorem 8.2 \cite{huisken2009mean}.
	
	After we have completed the surgery process as described in section 3 \cite{huisken2009mean} we have attached a convex region diffeomorphic to the standard disk to a neck. This allows us to see that after each surgery the surgered region together with the long neck is it attached to is diffeomorphic to a standard disc.
	
	Lastly in the proof of the neck continuation theorem \cite{huisken2009mean}, Huisken and Sinestrari shows that in the case a neck does close, it does so to a standard convex cap diffeomorphic to a disc that is attached in the standard way to the standard neck. This shows that a neck type which ends in both directions will be diffeomorphic to the standard sphere $S^n$ because it consists of the standard cylinder glued to two standard discs without sphere twisting.

	\newpage
	\nocite{*}
	\bibliographystyle{plain}
	\bibliography{referencescross}

\end{document}